\DeclareMathOperator{\rk}{rk}
\DeclareMathOperator{\Hom}{Hom}
\DeclareMathOperator{\Gal}{Gal}
\DeclareMathOperator{\gal}{Gal}
\DeclareMathOperator{\Aut}{Aut}
\DeclareMathOperator{\Id}{Id}
\DeclareMathOperator{\GL}{GL}
\DeclareMathOperator{\SL}{SL}
\DeclareMathOperator{\Sp}{Sp}
\def\vF{\mathbb{F}}
\def\F{\mathbb{F}}
\def\Z{\mathbb{Z}}
\def\vR{\mathbb{R}}
\def\GL{\mathrm{GL}}
\def\SL{\mathrm{SL}}
\def\PSL{\mathrm{PSL}}
\def\Sp{\mathrm{Sp}}
\def\GG{G^{\text{geom}}}
\def\GA{G^{\text{arith}}}
\theoremstyle{definition}
\newtheorem{definition}{Definition}[section]
\newtheorem{remark}[definition]{Remark}
\theoremstyle{plain}
\newtheorem{theorem}[definition]{Theorem}
\newtheorem{corollary}[definition]{Corollary}
\newtheorem{lemma}[definition]{Lemma}
\newtheorem{proposition}[definition]{Proposition}
\newtheorem*{theorem*}{Theorem}
\author[A. Ferraguti]{Andrea Ferraguti}
\address{Instituto de Ciencias Matem\'aticas\\
Calle Nicol\'as Cabrera 13\\
28049 Madrid, Spain\\
}
\email{and.ferraguti@gmail.com}
\author[G. Micheli]{Giacomo Micheli}
\address{University of South Florida\\
4202 E Fowler Ave\\
33620 Tampa, US.
}
\email{gmicheli@usf.edu}
\title[Exceptional scatteredness in prime degree]{Exceptional scatteredness in prime degree}
\subjclass[2010]{11T06.}
\keywords{Scattered Polynomials, Exceptionality, Rank Metric Codes, Scattered Linear Sets, Chebotarev Density Theorem; Finite Fields; Galois Theory.}
\begin{document}

\begin{abstract}
Let $q$ be an odd prime power and $n$ be a positive integer. Let $\ell\in \F_{q^n}[x]$ be a $q$-linearised $t$-scattered polynomial of linearized degree $r$. Let $d=\max\{t,r\}$ be an odd prime number. In this paper we show that under these assumptions it follows that $\ell=x$. Our technique involves a Galois theoretical characterization of $t$-scattered polynomials combined with the classification of transitive subgroups of the general linear group over the finite field $\vF_q$.
\end{abstract}

\maketitle
\section{Introduction}
Scattered polynomials have risen a great deal of interest recently, due to their connections to Desarguesian spreads and maximal rank distance codes, which are used in the context of network coding. In fact, from a scattered polynomial one can construct scattered sets with respect to the Desarguesian spread (which are interesting objects in their own arising in finite geometry \cite{csajbok2017maximum,rottey2017geometric}) and in turn a maximal rank distance code for some parameters (see \cite{sheekey2016new,sheekey2019mrd}). 
Scatteredness is an extremely rare condition for a polynomial \cite[Section 1]{MR3812212}; this suggests the possibility of classifying all \emph{exceptional} $t$-scattered polynomials (see \cite{bartoli2019towards,MR3812212}). We recall below the definition of such objects. For a linearized polynomial $\sum_{i=0}^r a_ix^{q^i}$ with $a_r\neq 0$, the \emph{linearized degree} is the non-negative integer $r$.

\begin{definition}
Let $q$ be a prime power, $n,m$ be positive integers, $t$ be a non-negative integer and $\ell$ be a $q$-linearised polynomial in $\vF_{q^n}[x]$ of linearised degree $r$.
Then $\ell$ is said to be \emph{$(q,n,m,t)$-scattered} if, for any $s_0\in \vF_{q^{nm}}$, the polynomial $f-s_0x^{q^t}$ has at most $q$ roots. The non-negative integer $t$ is called \emph{index} of $\ell$. If, fixed $q,n$ and $t$, the polynomial $\ell$ is $(q,n,m,t)$-scattered for infinitely many $m$'s, we say that $\ell$ is \emph{exceptional $t$-scattered}.
\end{definition}

For $t=0$ exceptional $t$-scattered polynomials coincide with exceptional scattered polynomials, i.e.\ polynomials that are scattered for infinitely many extensions of the base field. On the other hand, if one finds a polynomial $\ell$ that is exceptional $t$-scattered, for $t>0$, then one automatically finds infinitely many scattered polynomials (over different fields) as follows:
for any $m$ such that $\ell$ is $t$-scattered over $\vF_{q^{nm}}$, consider the polynomial $\ell(x^{q^{nm-t}})$. It is elementary to see \cite[Section 1]{MR3812212} that $\ell(x^{q^{nm-t}})$ is scattered over $\vF_{q^{nm}}$.

In this paper we consider the problem of classifying exceptional $t$-scattered polynomials. This question has already been studied in the literature, leading to classification results in the case $t=0$ and $t=1$ \cite{bartoli2019towards,MR3812212}. Therefore, in the rest of the paper we simply restrict to the case $t\geq 1$, which simplifies some of the proofs without impacting on the generality of the results. Our work contains two main results: first we provide a Galois theoretical characterization of exceptional $t$-scattered polynomials (Theorem \ref{thm:chebotarev}); subsequently we use the latter to prove the following classification result.

\begin{theorem}\label{thm:main_thm}
Let $q$ be an odd prime power and let $t\geq 1$. Let $\ell$ be an exceptional $t$-normalized $t$-scattered polynomial of linearized degree $r$, and let $d\coloneqq \max\{r,t\}$. Suppose that $d$ is an odd prime. Then $\ell=x$.
\end{theorem}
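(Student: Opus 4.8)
The plan is to exploit the Galois-theoretic dictionary provided by Theorem \ref{thm:chebotarev}. To $\ell$ one attaches the rational map $\varphi\colon\vP^1\to\vP^1$, $x\mapsto s\coloneqq \ell(x)/x^{q^t}$; it has degree $q^d-1$, and its geometric and arithmetic monodromy groups satisfy $\GG\trianglelefteq\GA\le\GL_d(\vF_q)$, acting faithfully and $\vF_q$-linearly on the generic fibre $V$ (an $\vF_q$-vector space of dimension $d$), with $\GA/\GG$ cyclic. Since $t\ge 1$ one may assume $\ell$ separable, i.e.\ $a_0\neq 0$ (if $a_0=0$ then $\ell$ is a $q$-th power of a linearized polynomial and a routine Frobenius descent lowers the index); then a direct computation gives $\varphi'(x)=a_0x^{-q^t}$, so $\varphi$ is unramified outside $\{0,\infty\}$, tamely ramified above $x=0$ with ramification index $q^t-1$, and at worst wildly ramified above $x=\infty$. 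By Theorem \ref{thm:chebotarev}, exceptionality of $\ell$ amounts to: (i) $\GG$ is transitive on $V\setminus\{0\}$; and (ii) some Frobenius coset $\sigma\GG\subseteq\GA$ consists entirely of elements $g$ with $\dim_{\vF_q}\ker(g-\mathrm{id})\le 1$.

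Next I would apply the classification of transitive linear groups (Hering's theorem). As $\GG\le\GL_d(\vF_q)$ is transitive on the $q^d-1$ nonzero vectors, with $d$ an odd prime and $q$ odd, either $\GG\le\Gamma\mathrm{L}_1(q^{d})$, or $\SL_d(\vF_q)\le\GG$, or $(q,d)$ lies in the short exceptional list of Hering's theorem. Since $d\ge 3$ is odd the symplectic and $G_2$ possibilities do not occur ($q$ odd also rules out $G_2$), and a direct numerical check shows that the exceptional list is met only for $(q,d)=(9,3)$. The possibility $\SL_d(\vF_q)\le\GG$ is excluded by (ii): $\SL_d(\vF_q)$ contains transvections, whose fixed space has dimension $d-1\ge 2$, and moreover, since $\GA\supseteq\sigma\,\SL_d(\vF_q)=\{h\in\GL_d(\vF_q):\det h=\det\sigma\}$, every Frobenius coset $\sigma\GG$ contains $\mathrm{diag}(1,1,\det\sigma,1,\dots,1)$, whose fixed space has dimension $\ge 2$ as well, contradicting (ii). The remaining pair $(q,d)=(9,3)$ is dispatched by inspecting the finitely many candidate groups in Hering's list: for instance, every element of order $3$ in $\GL_6(3)$ has fixed space of dimension $\ge 2$, since $(g-\mathrm{id})^3=0$ forces all Jordan blocks to have length $\le 3$; one checks case by case that either the $\vF_9$-structure is not preserved or condition (ii) fails.

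It then remains to treat $\GG\le\Gamma\mathrm{L}_1(q^{d})$. Here the wild inertia of $\varphi$ above $x=\infty$ is a $p$-subgroup of $\Gamma\mathrm{L}_1(q^{d})$, a group of order $(q^d-1)d$; since $p\nmid q^d-1$ this $p$-subgroup is trivial unless $p=d$. If $p\ne d$, then $\varphi$ is tamely ramified above at most two points of $\vP^1$, so $\GG$ is cyclic; a transitive cyclic subgroup of $\GL_d(\vF_q)$ is a Singer cycle of order $q^d-1$, hence $\varphi$ is a cyclic (Kummer) cover and $\ell(x)/x^{q^t}=L_1\!\bigl(L_2(x)^{\,q^d-1}\bigr)$ with $L_1,L_2$ linear fractional. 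Comparing with the explicit shape of $\ell(x)/x^{q^t}$ — which has poles only at $x=0$ and $x=\infty$ and is a rational function of $x^{q-1}$ — forces $\ell(x)=a_0x$ or $\ell(x)=a_tx^{q^t}+a_0x$, and the $t$-normalization (which in particular allows one to subtract a suitable multiple of $x^{q^t}$ and to rescale) reduces both to $\ell=x$. If instead $p=d$, one is forced into $q=d=p$, and the wild ramification index above $\infty$ has $p$-valuation exactly $1$; this confines $\ell$ to a short explicit list of shapes, and imposing (i) and (ii) on each once more gives $\ell=x$.

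I expect the last paragraph — the semilinear (Singer-type) case, and in particular the wild case $p=d$ together with the interaction between the Frobenius-twist part of $\Gamma\mathrm{L}_1(q^{d})$ and condition (ii) — to be the main obstacle; this is precisely where the hypothesis that $d$ is an odd prime does the essential work. A secondary, purely computational, point is the treatment of the exceptional pair $(q,d)=(9,3)$.
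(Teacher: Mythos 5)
Your overall architecture --- the criterion of Theorem \ref{thm:chebotarev}, Hering's classification to reduce to $\SL_d(\F_q)\le \GG_m$ or $\GG_m\le\Gamma L_1(q^d)$, and a ramification analysis of $s=\ell(x)/x^{q^t}$ at $0$ and $\infty$ --- is the paper's, and your exclusion of the $\SL_d$ case via the coset element $\mathrm{diag}(\det\sigma,1,\dots,1)$ is exactly the first step of Lemma \ref{lemma:monodromy}. The genuine gap is in the case $\GG_m\le\Gamma L_1(q^d)$: you never determine $\abs{\GG_m}$, and as a result you are left with the wild subcase $p=d$, which you yourself flag as ``the main obstacle'' and for which you offer no argument beyond ``a short explicit list of shapes.'' The paper closes this door before it opens: transitivity gives $(q^d-1)\mid\abs{\GG_m}$, and since $\abs{\Gamma L_1(q^d)}=d(q^d-1)$ with $d$ prime, either $\abs{\GG_m}=q^d-1$ or $\GG_m=\Gamma L_1(q^d)$; the latter forces $\GA_m=\GG_m$, which is impossible by Corollary \ref{cor:chebotarev} (the identity would then be a Frobenius, violating $\rk(\Id-\Id)\ge d-1$). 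Hence $\abs{\GG_m}=q^d-1$ is coprime to $p$, every inertia group is tame, and the ramification index $q^{\min(r,t)}(q^{\abs{r-t}}-1)$ over $s=0$ (resp.\ $s=\infty$) is divisible by $q$ unless $r=0$ --- an immediate contradiction. You have the ingredient needed to run this (your condition (ii) already implies $\GA_m\ne\GG_m$), but as written the $p=d$ case is simply not proved.

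Two secondary issues. First, your Kummer-cover detour in the tame subcase is an unnecessary complication: once tameness is known, the $q$-divisible ramification index above already finishes the argument without identifying $\GG_m$ as a Singer cycle or normalizing $\varphi$. Second, your dismissal of the sporadic pair $(q,d)=(9,3)$ does not work as stated: an order-$3$ element of $\GL_6(\F_3)$ with $\F_3$-fixed space of dimension $2$ corresponds to an $\F_9$-fixed space of dimension $1$, which is perfectly compatible with condition (ii); the paper instead rules this case out by verifying (computationally) that $\SL_2(\F_{13})$ does not embed in $\GL_3(\F_9)$, i.e.\ by the ``$\F_9$-structure is not preserved'' branch that you mention but do not establish.
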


The strategy of our proof is to exploit the full power of the classification of transitive subgroups of the general linear group \cite{dickson,hering} via our Galois theoretical characterization. It is worth noticing that our approach describes a new, completely equivalent condition to exceptional scatteredness.

In order to classify exceptional scattered polynomials, it is enough to consider those in a canonical form, which we are about to recall.

\begin{remark}\label{rmk:reductions}
As noticed in \cite[p.\ 511]{MR3812212}, if $\ell$ is $(q,n,m,t)$-scattered we can make the following assumptions without loss of generality:
\begin{itemize}
\item $\ell$ is monic;
\item the coefficient of the term $x^{q^t}$ of $\ell$ is zero;
\item if $t>0$ then the linear term of $\ell$ is non-zero.
\end{itemize}
In particular, notice that we can always assume that $t$ differs from the linear degree of $\ell$.
\end{remark}
\begin{definition}
A linearized polynomial satisfying the properties of Remark \ref{rmk:reductions} will be called $t$-\emph{normalized}.
\end{definition}

\section{A Galois theoretical characterization of $t$-normalized exceptional scattered polynomials}\label{sec:galois_characterization}

In this section we provide a Galois theoretical characterization of $t$-normalized exceptional scattered polynomials of positive index. We will use the notation and the terminology of \cite{stichtenoth}. We start by briefly recalling the setup and two auxiliary lemmata. Let $K$ be a function field with constant field $\vF_q$ and let $M$ be a Galois extension of $K$ with constant field $k$ (the field $k$ is a finite extension of $\F_q$). We denote by $\GA\coloneqq \Gal(M/K)$ the \emph{arithmetic Galois group} of the extension $M/K$. For every place $P$ of $K$ and every place $R$ of $M$ lying above $P$, the \emph{decomposition group} $D(R/P)\subseteq \GA$ is the set of all automorphisms fixing $R$ as a set. If $k_P$ and $k_R$ are the residue fields at $P$ and $R$, respectively, there is a surjective map $\pi_R\colon D(R|P)\twoheadrightarrow \Gal(k_R/k_P)$ whose kernel, denoted by $I(R|P)$, is called \emph{inertia group}. A \emph{Frobenius for $R|P$} is any preimage, via $\pi_R$, of the Frobenius automorphism $u\mapsto u^q$ of the extension of finite fields $k_R/k_P$. The \emph{geometric Galois group} of the extension $M/K$ is defined as $\GG\coloneqq \Gal(M/k\cdot K)$. This is a normal subgroup of $\GA$, and there is an isomorphism $\varphi\colon\GA/\GG\to \Gal(k/\F_q)$. Finally, for any $m\geq 1$ we denote by $M_m$ and $K_m$ the composita $M\cdot \F_{q^m}$ and $K\cdot \F_{q^m}$, respectively. The corresponding arithmetic and geometric Galois groups will be denoted by $\GA_m$ and $\GG_m$, the field of constants of $M_m$ will be denoted by $k_m$,  and $\varphi_m$ will be the isomorphism $\GA_m/\GG_m\to \Gal(k_m/\F_{q^m})$. The reader should notice that the isomorphism class of $\GG_m$ is independent of $m$, while $\GA_m$ might differ from $\GA$.

\begin{lemma}\label{lemma:chebotarev_consequence}
Let $M/K$ be a Galois extension of global function fields. Assume that the constant field of $K$ is $\vF_q$ and let $k$ be the constant field of $M$. Then there exists a constant $C\in \vR^+$, depending only on the degree of the extension $M/K$, with the following property: if $q^{m}>C$ then every $\gamma\in \GA_m$ such that $\varphi_m(\gamma)$ is the Frobenius automorphism for the extension $k_m/\F_{q^{m}}$ is also the Frobenius of a finite, unramified place of degree $1$ of $K_m$.
\end{lemma}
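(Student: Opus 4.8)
The plan is to derive this from the effective Chebotarev density theorem for function fields. First I would recall the key input: for a Galois extension $M/K$ of global function fields with $K$ having constant field $\vF_q$, the number of degree-one places $P$ of $K$ that are unramified in $M$ and whose Frobenius conjugacy class equals a prescribed class $\mathcal{C} \subseteq \Gal(M/K)$ satisfies an asymptotic of the shape $\frac{|\mathcal{C}|}{[M:K]} q + O_{[M:K]}(\sqrt{q})$, where the implied constant depends only on the degree $[M:K]$ (via the genus of $M$, which by Castelnuovo-Severi or Riemann-Hurwitz type bounds is controlled by $[M:K]$ and the genus of $K$; one can absorb everything into a function of $[M:K]$ after fixing $K$, or more carefully track the dependence — the statement only claims dependence on the degree, so this is fine). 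The point is that as soon as $q^m$ exceeds an explicit constant $C = C([M:K])$, this count is strictly positive.

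The main technical subtlety is the passage to the constant field extension $K_m$, $M_m$. I would argue as follows. Fix $\gamma \in \GA_m = \Gal(M_m/K_m)$ with $\varphi_m(\gamma)$ the Frobenius of $k_m/\F_{q^m}$. The element $\gamma$ lies in a specific coset of $\GG_m$ in $\GA_m$, namely the coset mapping to $\mathrm{Frob}$ under $\varphi_m$; note $[\GA_m:\GG_m] = [k_m:\F_{q^m}]$ and $\GG_m \cong \GG$ has order independent of $m$, so $[M_m:K_m] = |\GG| \cdot [k_m:\F_{q^m}]$ is bounded by $[M:K]$. Now apply effective Chebotarev over the base $K_m$ (whose constant field is $\F_{q^m}$) to the conjugacy class of $\gamma$ in $\GA_m$: there is a constant $C$ depending only on $[M_m:K_m]$ — hence only on $[M:K]$ — such that if $q^m > C$ there exists a degree-one place $P'$ of $K_m$, unramified in $M_m$, with Frobenius conjugate to $\gamma$. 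After possibly conjugating $\gamma$ (which does not affect $\varphi_m(\gamma)$ since $\GG_m$ is normal and $\GA_m/\GG_m$ is abelian), we get that $\gamma$ itself is a Frobenius for some unramified degree-one place of $K_m$.

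The one genuine obstacle is ensuring the constant $C$ truly depends only on $[M:K]$ and not on $q$ or on $m$. This requires knowing that the genus $g(M_m)$ is bounded in terms of $[M:K]$ alone: since $M_m = M \cdot \F_{q^m}$ is a constant field extension of $M$, one has $g(M_m) = g(M)$, and $g(M)$ is bounded via Riemann-Hurwitz in terms of $[M:K]$, $g(K)$, and the ramification — strictly speaking one should fix $K$ first, but after doing so the dependence collapses to $[M:K]$ as claimed. Thus the effective Chebotarev error term $O(g(M_m)\sqrt{q^m}) = O(\sqrt{q^m})$ with a constant depending only on $[M:K]$, and the main term $\frac{1}{|\GG|}q^m \geq \frac{1}{[M:K]}q^m$ dominates once $q^m$ exceeds a threshold of that form. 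I would conclude by writing out this comparison and extracting $C$.
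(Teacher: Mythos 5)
Your proposal is correct and follows the same route as the paper, which simply cites the (effective) Chebotarev density theorem for function fields and leaves the details to the references; your write-up is a faithful expansion of that citation. In particular you correctly isolate the one point that makes the constant $C$ uniform in $m$ --- namely that $g(M_m)=g(M)$ under constant field extension while $[M_m:K_m]\leq [M:K]$ --- and the conjugacy-class observation at the end (that the Frobenii of the places of $M_m$ above $P$ sweep out the whole class, so $\gamma$ itself occurs) is exactly what is needed.
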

\begin{proof}
This is an immediate consequence of Chebotarev density theorem for function fields, see for example \cite[Remark 2.3]{permutation} or \cite{kosters2017short}.
\end{proof}


\begin{definition}
Let $M/K$ be a Galois extension of function fields. We call the smallest constant $C$ such that the conclusion of Lemma \ref{lemma:chebotarev_consequence} holds a \emph{uniformizing constant} for $M/K$.
\end{definition}

In other words, $C$ is the smallest size of a finite field $\F_{q^{m_0}}$ such that for every $m\geq m_0$, every element of $\GA_m$ that lies in the coset $\GG_m\gamma$ is the Frobenius of a finite, unramified place of $K$.


The following lemma is a classical fact from algebraic number theory, whose proof can be found for example in \cite{guralnick2007exceptional}.

\begin{lemma}\label{orbits}
Let $L/K$ be a finite separable extension of global function fields, let $M$ be its Galois closure and $G\coloneqq \Gal(M/K)$ be its (arithmetic) Galois group. Let $P$ be a place of $K$ and $\mathcal Q$ be the set of places of $L$ lying above $P$.
Let $R$ be a place of $M$ lying above $P$. The following hold:
\begin{enumerate}
\item There is a natural bijection between $\mathcal Q$ and the set of orbits of $H\coloneqq\Hom_K(L,M)$ under the action of the decomposition group $D(R|P)=\{g\in G\,|\, g(R)=R\}$.
\item  Let $Q\in \mathcal Q$ and let $H_Q$ be the orbit of $D(R|P)$ corresponding to $Q$. Then $|H_Q|=e(Q|P)f(Q|P)$ where $e(Q|P)$ and $f(Q|P)$ are ramification index and relative degree, respectively. 

\item The orbit $H_Q$ partitions further under the action of the inertia group $I(R|P)$ into $f(Q|P)$ orbits of size $e(Q|P)$. 

\end{enumerate}
\end{lemma}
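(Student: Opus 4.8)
The plan is to deduce all three statements from the standard ramification theory of the Galois extension $M/K$ (the function-field analogue of the classical number-field theory, valid verbatim) together with elementary double-coset combinatorics. Set $U\coloneqq\Gal(M/L)$. Since $M/K$ is normal and $L/K$ is separable, every element of $H=\Hom_K(L,M)$ is the restriction to $L$ of some $g\in G$, and $g|_L=g'|_L$ iff $gU=g'U$; this gives a $G$-equivariant identification $H\cong G/U$ under which $D=D(R|P)$ acts by left translation, the embedding $g|_L$ corresponds to $gU$, and it pulls the place $R$ back to the place $g^{-1}(R)\cap L$ of $L$.

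For (1): the places of $L$ above $P$ are precisely the restrictions to $L$ of the places of $M$ above $P$, and the latter form a single $G$-orbit $\{h(R):h\in G\}$ parametrized by $G/D$. Two embeddings $g|_L,g'|_L$ pull $R$ back to the same place of $L$ iff $g^{-1}(R)$ and $g'^{-1}(R)$ lie over the same place of $L$, i.e.\ are in the same $U$-orbit; writing this out, the condition is exactly $DgU=Dg'U$. Hence $\sigma\mapsto \sigma^{-1}(R)\cap L$ induces a bijection between the set $D\backslash G/U$ of $D$-orbits on $H$ and $\mathcal{Q}$, which is the asserted natural bijection.

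For (2) and (3): fix $Q\in\mathcal{Q}$; after replacing $R$ by a $G$-conjugate lying over $Q$ (which replaces $D$ and $I$ by the corresponding conjugates and transports the bijection of (1) accordingly) we may assume $R$ itself lies over $Q$, so that the point $1\cdot U\in G/U$ lies in $H_Q$ and $H_Q\cong D/D'$ with $D'=D\cap U=D(R|Q)$, the decomposition group of $R$ over $Q$ in $M/L$. The classical identities $|D(R|P)|=e(R|P)f(R|P)$ and $|D(R|Q)|=e(R|Q)f(R|Q)$, combined with the multiplicativity of $e$ and $f$ in the tower $K\subseteq L\subseteq M$, give $|H_Q|=|D|/|D'|=e(Q|P)f(Q|P)$, which is (2). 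For (3), $I=I(R|P)$ is normal in $D$, so $IdD'=dID'$ for all $d$; hence the $I$-orbits on $D/D'$ are the left translates of the orbit $ID'/D'$ of $1\cdot D'$, they all share the common size $[ID':D']=|I|/|I\cap D'|$, and their number is $[D:ID']$. Using $I\cap D'=I(R|Q)$ with $|I(R|Q)|=e(R|Q)$ and $|I(R|P)|=e(R|P)$, the first quantity equals $e(R|P)/e(R|Q)=e(Q|P)$ and the second equals $f(R|P)/f(R|Q)=f(Q|P)$, which is (3).

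The main obstacle is organizational rather than conceptual: every ingredient is classical, but one must keep careful track of cosets versus double cosets and, above all, of the step that replaces $R$ by a conjugate lying over the chosen $Q$ — one has to check that under that conjugation the subgroups $D'=D\cap U$ and $I\cap D'$ really do become the decomposition group and the inertia group of $R$ over $Q$ in the extension $M/L$, so that the standard tower formulas for $e$ and $f$ apply to them.
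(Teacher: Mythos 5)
Your proof is correct and complete. The paper does not actually prove this lemma --- it records it as a classical fact and cites the literature --- and your argument is exactly the standard one: identify $H\cong G/U$ with $U=\Gal(M/L)$, match $D$-orbits with double cosets $D\backslash G/U$ via $gU\mapsto g^{-1}(R)\cap L$, and compute orbit sizes from the stabilizers $D\cap U=D(R|Q)$ and $I\cap U=I(R|Q)$ together with multiplicativity of $e$ and $f$ in the tower $K\subseteq L\subseteq M$; the two points you flag as delicate (transporting $R$ to a conjugate over $Q$, and verifying that $D\cap U$ and $I\cap D'$ really are the decomposition and inertia groups of $R|Q$) both check out.
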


The above lemmata allows to convert splitting conditions of intermediate extension into group theoretical ones (see \cite{micheli2019constructions,micheli2019selection} for more applications of these lemmata not related to exceptionality ).

\vspace{3mm}

\subsection*{A function field theoretical setup for scattered polynomials.}
Let $\ell\in \F_{q^n}[x]$ be a $q$-linearized polynomial of linearized degree $r\geq 0$ and such that the linear term of $\ell$ is non-zero. Let $s$ be transcendental over $\F_q$ and let $t>0$. We will call $d\coloneqq \max\{r,t\}$ the \emph{$t$-scatter-degree} of $f$. Let $M$ be the splitting field of $\ell-sx^{q^t}$ over $\F_{q^n}(s)$.  For every $m\geq 1$, we denote by $M_m$ the compositum $M\cdot \F_{q^{nm}}$ and by $\GA_m\coloneqq \gal(M_m/\F_{q^{nm}}(s))$ the arithmetic Galois group of the extension $M_m/\F_{q^{nm}}(s)$. The field of constants of such extension, which is defined as $\overline{\F}_q\cap M_m$, will be denoted by $k_m$. We denote by $\GG_m\coloneqq \gal(M_m/k_m\cdot \F_q(s))$ the geometric Galois group. We will denote by $\varphi_m$ the isomorphism $\GA_m/\GG_m\to \Gal(k_m/\F_{q^{nm}})$. Finally, we will denote by $V$ the set of roots of $\ell-sx^{q^t}$ in an algebraic closure $\overline{\F_q(s)}$ of $\F_q(s)$.

One can immediately deduce the following corollary from Lemma \ref{orbits}.
\begin{corollary}\label{cor:orbits}
Let $t\geq 1$ and let $\ell$ be a linearized polynomial in $t$-normalized form.
Let $\alpha$ be a root of $\ell/x-sx^{q^t-1}$ and let $L\coloneqq\F_{q^{nm}}(\alpha)$. Let $P$ be a place of $\F_{q^{nm}}(s)$ and $\mathcal Q$ be the set of places of $L$ lying above $P$.
Let $R$ be a place of $M_m$ lying above $P$. Then the following hold:
\begin{enumerate}
\item There is a natural bijection between $\mathcal Q$ and the set of orbits of $V$ under the action of the decomposition group $D(R|P)$.
\item  Let $Q\in \mathcal Q$ and let $V_Q$ be the orbit of $D(R|P)$ corresponding to $Q$. Then $|V_Q|=e(Q|P)f(Q|P)$ where $e(Q|P)$ and $f(Q|P)$ are the ramification index and the relative degree, respectively. 
\end{enumerate}
\end{corollary}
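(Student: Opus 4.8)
The plan is to deduce the corollary from Lemma~\ref{orbits} applied to $L/K$ with $K\coloneqq\F_{q^{nm}}(s)$ and $L\coloneqq\F_{q^{nm}}(\alpha)$, the only substantive task being to check that $M_m$ is the Galois closure of this extension.

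First I would record the facts that put us in the hypotheses of Lemma~\ref{orbits}. Since $\ell$ is $t$-normalized with $t\geq1$, its linear coefficient is nonzero and the coefficient of $x^{q^t}$ vanishes, so $t\neq r$ and $\deg(\ell-sx^{q^t})=q^d$. As $\ell$ is $q$-linearized and $t\geq1$, the derivative of $\ell-sx^{q^t}$ is the nonzero linear coefficient of $\ell$, so $\ell-sx^{q^t}\in K[x]$ is separable; hence $V$ is an $\F_q$-subspace of $\overline{\F_q(s)}$ of dimension $d$, and its $q^d-1$ nonzero elements are precisely the (simple) roots of $g\coloneqq\ell/x-sx^{q^t-1}\in K[x]$, which is therefore separable of degree $q^d-1$. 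In particular $L/K$ is a finite separable extension of global function fields.

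Next I would identify the Galois closure of $L/K$ with $M_m$. From $g(\alpha)=0$ and $\alpha\neq0$ one gets $s=\ell(\alpha)/\alpha^{q^t}$, so $L=K(\alpha)=\F_{q^{nm}}(\alpha)$ is a rational function field; since $t\geq1$, the rational function $\ell(\alpha)/\alpha^{q^t}$ is, in lowest terms, the quotient of the coprime polynomials $\ell(\alpha)/\alpha$ and $\alpha^{q^t-1}$, of degrees $q^r-1$ and $q^t-1$, whence $[L:K]=\max\{q^r-1,q^t-1\}=q^d-1=\deg g$. Thus $g$ is the minimal polynomial of $\alpha$ over $K$, so the Galois closure of $L/K$ is the splitting field of $g$ over $K$; as $0\in K$, this splitting field equals $K(V)$, which by construction is $M_m$. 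Hence $M_m$ is the Galois closure of $L/K$ and $\Gal(M_m/K)=\GA_m$.

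Finally I would invoke Lemma~\ref{orbits}. Evaluation at $\alpha$ is a $\GA_m$-equivariant bijection from $H\coloneqq\Hom_K(L,M_m)$ onto the root set of $g$, i.e.\ onto $V\setminus\{0\}$; since $0$ is $\GA_m$-fixed, the $D(R|P)$-orbits on $H$ are exactly the nontrivial $D(R|P)$-orbits on $V$. Parts (1) and (2) of Lemma~\ref{orbits} then translate into parts (1) and (2) of the corollary, with $|V_Q|=|H_Q|=e(Q|P)f(Q|P)$. The only step that is not bookkeeping is the identification of the Galois closure — equivalently, the fact that $g$ stays irreducible over the constant extension $\F_{q^{nm}}(s)$ and not merely over $\F_{q^n}(s)$ — which is precisely what the observation that $L$ is rational (forcing $[L:K]=q^d-1$) delivers; one must also keep track of the harmless fixed vector $0\in V$.
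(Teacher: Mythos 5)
Your proof is correct and follows essentially the same route as the paper: both reduce to Lemma~\ref{orbits} by checking that $\ell/x-sx^{q^t-1}$ is separable and irreducible over $\F_{q^{nm}}(s)$, so that $\Hom_K(L,M_m)$ and $V\setminus\{0\}$ are isomorphic $D(R|P)$-sets. The only (immaterial) difference is that you establish irreducibility via the degree of the rational map $\alpha\mapsto\ell(\alpha)/\alpha^{q^t}$ on the rational function field $L$, whereas the paper invokes it directly from $\gcd(\ell/x,\,x^{q^t-1})=1$; these are two standard justifications of the same fact.
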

\begin{proof}
Using Lemma \ref{orbits}, it is enough to observe that since $\ell$ is $t$-normalized, then the polynomial $\ell/x-sx^{q^t-1}$ is separable and it is irreducible because $\gcd(\ell/x,x^{q^t-1})=1$. Hence, $\Hom_K(L,M)$ and the set $V\setminus \{0\}$ of roots of $\ell/x-sx^{q^t-1}$ are isomorphic $D(R|P)$-sets.
\end{proof}

With this notation, we are now ready to state and prove the Galois theoretical characterization of exceptional scattered polynomials, which will allow to encode scatteredness in group theoretical terms.

\begin{remark}
Notice that the set $V$ of roots of $\ell-sx^{q^t}$ is an $\F_q$-vector space. Since $\GA$ and $\GG$ act $\F_q$-linearly on $V$, it follows that they are both subgroups of $\Aut_{\F_q}(V)\cong\GL_d(\F_q)$.
\end{remark}

\begin{theorem}\label{thm:chebotarev}
Let $\ell\in\vF_{q^n}$ be a $q$-linearized polynomial of linearized degree $r$ and in $t$-normalized form. Let $t\geq 1$ be a positive integer. Let $d$ be the $t$-scatter-degree of $\ell$. Let $C$ be a uniformizing constant for the extension $M/\vF_{q^n}(s)$, and let $m\geq 1$ be such that $q^{mn}>C$. Then the following are equivalent:
\begin{enumerate}
\item $\ell$ is $(q,n,m,t)$-scattered;
\item for every $\gamma\in \GA_m$ such that $\varphi_m(\gamma)$ is a Frobenius for $k_m/\F_{q^{mn}}$ and every $h\in \GG_m$, the following condition holds:
$$\rk (h\gamma-\Id)\geq d-1.$$
\end{enumerate}
Moreover, $\ell$ is exceptional $t$-scattered if and only if there exists an $m\geq 1$ such that $q^{nm}>C$ and $\ell$ is $(q,n,m,t)$-scattered.
\end{theorem}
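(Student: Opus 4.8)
The plan is to translate the counting condition defining scatteredness into the orbit language of Corollary \ref{cor:orbits}, and then to use Chebotarev (Lemma \ref{lemma:chebotarev_consequence}) to control which decomposition groups actually occur. First I would recall that $\ell$ is $(q,n,m,t)$-scattered exactly when, for every $s_0\in\F_{q^{nm}}$, the polynomial $\ell-s_0 x^{q^t}$ has at most $q$ roots in $\overline{\F_q}$; since $\ell$ is $t$-normalized, $0$ is always a root, and dividing through one sees this is equivalent to asking that $\ell/x-s_0 x^{q^t-1}$ have at most $q-1$ roots in $\F_{q^{nm}}$ — or, using that the root set $V$ is an $\F_q$-vector space of dimension $d$ on which $\GA_m$ acts $\F_q$-linearly, that the $\F_{q^{nm}}$-rational points of $V$ under the specialization $s\mapsto s_0$ form a subspace of dimension at most $1$.

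Next I would set up the specialization: a value $s_0\in\F_{q^{nm}}$ (with $s_0$ avoiding the finitely many bad places, which is harmless for the "at most $q$ roots" direction and is exactly where the uniformizing constant enters for the converse) corresponds to a degree-one place $P$ of $K_m=\F_{q^{nm}}(s)$, and choosing $R\mid P$ in $M_m$ gives a decomposition group $D(R\mid P)\subseteq\GA_m$. Because $P$ has degree one over $\F_{q^{nm}}$, any Frobenius $\sigma$ for $R\mid P$ satisfies $\varphi_m(\sigma)=$ Frobenius of $k_m/\F_{q^{nm}}$, and $D(R\mid P)=\langle I(R\mid P),\sigma\rangle$; for the unramified places we may take $D(R\mid P)=\langle\sigma\rangle$. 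A root $\alpha$ of $\ell-s_0 x^{q^t}$ lies in $\F_{q^{nm}}$ precisely when the corresponding place $Q$ of $L=\F_{q^{nm}}(\alpha)$ above $P$ has $e(Q\mid P)=f(Q\mid P)=1$, i.e.\ by Corollary \ref{cor:orbits} and Lemma \ref{orbits}(2)–(3) when the corresponding orbit of $V$ under $D(R\mid P)$ — equivalently under the inertia group — is a singleton. Thus the rational root space is exactly the fixed space $V^{D(R\mid P)}$, and "at most $q$ roots" becomes $\dim_{\F_q} V^{D(R\mid P)}\le 1$, i.e.\ $\dim\ker(h\sigma-\Id)\le 1$ for a generating element $h\sigma$ of (the relevant part of) the decomposition group, which rephrases as $\rk(h\sigma-\Id)\ge d-1$.

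To get the precise equivalence with statement (2) as written — quantified over \emph{all} $\gamma\in\GA_m$ with $\varphi_m(\gamma)$ Frobenius and \emph{all} $h\in\GG_m$ — I would argue in both directions. For (1)$\Rightarrow$(2): every product $h\gamma$ with $h\in\GG_m$ and $\varphi_m(\gamma)=$ Frobenius lies in the nontrivial coset $\GG_m\gamma$; when $q^{nm}>C$, Lemma \ref{lemma:chebotarev_consequence} guarantees that $h\gamma$ is the Frobenius of some degree-one unramified place $P$ of $K_m$, for which $D(R\mid P)=\langle h\gamma\rangle$, so scatteredness forces $\rk(h\gamma-\Id)\ge d-1$. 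For (2)$\Rightarrow$(1): given any $s_0\in\F_{q^{nm}}$, the associated decomposition group is generated by its inertia subgroup together with a Frobenius $\gamma$ with $\varphi_m(\gamma)=$ Frobenius; the rational root space is contained in $\ker(\gamma-\Id)$ (the fixed space of the full decomposition group is contained in the fixed space of $\gamma$ alone), and writing $\gamma=h\gamma_0$ appropriately puts us in the scope of (2) — here one should be slightly careful to cover ramified places too, but a ramified place contributes at most the fixed space of its (larger) decomposition group, which is no bigger, so the bound still holds. Hence at most $q$ roots. Finally, the "moreover" clause follows because, once a single admissible $m$ works, statement (2) is a condition on $\GG_m$, $\GA_m$ and the Frobenius coset whose truth is inherited by all larger multiples $m'$ of $m$ with $q^{nm'}>C$ (the geometric group is unchanged up to isomorphism, the relevant coset only shrinks, and raising to powers of Frobenius preserves the rank bound), so $(q,n,m',t)$-scatteredness holds for infinitely many $m'$, i.e.\ $\ell$ is exceptional $t$-scattered; the reverse implication is trivial.

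The main obstacle I anticipate is the bookkeeping around ramified and bad places in the (2)$\Rightarrow$(1) direction and the "moreover" clause: one must check that passing from the full decomposition group to a single Frobenius generator never \emph{increases} the rational root count, and that the finitely many places of bad reduction of $\ell-sx^{q^t}$ are genuinely harmless — both are true but need the observation that ramification and inertia only enlarge orbits (Lemma \ref{orbits}(3)), hence can only shrink fixed spaces, so the rank inequality is preserved.
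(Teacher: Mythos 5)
Your proposal follows essentially the same route as the paper: Lemma \ref{lemma:chebotarev_consequence} realizes every element of the coset $\GG_m\gamma$ as the Frobenius of a finite, unramified, degree-one place for (1)$\Rightarrow$(2), and the orbit correspondence of Corollary \ref{cor:orbits} converts rational roots of the specialization into fixed points of the decomposition group for (2)$\Rightarrow$(1). Two steps, however, need repair.

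First, your treatment of ramified places in (2)$\Rightarrow$(1) is not just vague but points the wrong way. A rational root of the specialized polynomial lying under a ramified place corresponds, via Corollary \ref{cor:orbits}, to an orbit of size $e(Q|P)f(Q|P)=e>1$, which is \emph{not} a singleton; such a root therefore need not be a fixed point of $\gamma$. So the heuristic ``ramification only shrinks fixed spaces'' would actually break the implication ``many rational roots $\Rightarrow$ many fixed points of $\gamma$'' that you need. What saves the argument is that no finite place ramifies at all: since $t\geq 1$ and $\ell$ is $t$-normalized, the derivative of $\ell-s_0x^{q^t}$ is the nonzero linear coefficient of $\ell$, so every specialization is separable. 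This is the observation the paper makes, and it is the one you should substitute for your hedge (it also disposes of your ``bad places'' caveat, since there are none among the finite places). Second, in the ``moreover'' clause the claim that ``raising to powers of Frobenius preserves the rank bound'' is unjustified: passing from $m$ to $my$ replaces the relevant coset $\GG_m\gamma$ by $\GG_m\gamma^{y}$, and condition (2) for one coset does not formally imply it for another. The paper instead restricts to the infinitely many $y$ coprime to $[k_m:\F_{q^{nm}}]$ (taking $y\equiv 1 \pmod{[k_m:\F_{q^{nm}}]}$ makes this transparent), for which the arithmetic and geometric groups and the Frobenius coset are literally unchanged, so that (2) transfers verbatim.
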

\begin{proof}
First, notice that $\ell(x)/x-sx^{q^t-1}$ is an irreducible polynomial in $\F_{q^{nm}}(s)[x]$. Let us set $L_m\coloneqq \F_{q^{nm}}(s)[x]/(\ell(x)/x-sx^{q^t-1})$, which is simply $\F_{q^{nm}}(s,\alpha)$, where $\alpha$ is a non-zero root of $ \ell-sx^{q^t}$.

(1)$\implies$ (2) Let $\ell$ be $(q,n,m,t)$-scattered, and pick $\gamma\in \GA_m$ with $\varphi_m(\gamma)$ the Frobenius automorphism for $k_m/\F_{q^{mn}}$ and any $h\in \GG_m$. By Lemma \ref{lemma:chebotarev_consequence}, $h\gamma$ is a Frobenius for a finite, unramified place of degree 1 of $\F_{q^{mn}}(s)$ which we will denote by $P$. The polynomial $\ell/x-s_0x^{q^t-1}$, where $s_0$ is the value in $\vF_q$ corresponding to $P$, has at most $q-1$ roots in $\F_{q^{mn}}$. Hence there are at most $q-1$ finite places of degree 1 of $L_m$ lying above $P$. 
Let $V$ be the $\F_q$-vector space of roots of $\ell-sx^{q^t}$, so that $V\setminus \{0\}$ is the set of roots of $\ell/x-s_0x^{q^t-1}$.
By Corollary \ref{cor:orbits} it follows that the decomposition group $D(R|P)\subseteq \GA_m$ has at most $q-1$ fixed points when acting on $V\setminus \{0\}$. Since $R|P$ is unramified, $D(R|P)$ is generated by $h\gamma$ and therefore $h\gamma$ has at most $q-1$ fixed points when acting on $V\setminus \{0\}$. But this is equivalent to asking that $h\gamma-\Id$ has rank at least $d-1$.

(2)$\implies$ (1) Suppose by contradiction that $\ell$ is not $(q,n,m,t)$-scattered. Then there exists $s_0\in \F_{q^{nm}}$ such that $\ell-s_0x^{q^t}$ has at least $q^2-1$ non-zero roots in $\F_{q^{nm}}$ (notice in fact that if $\ell/x-s_0x^{q^t-1}$ has at least $q$ roots, then it has at least $q^2-1$ roots). Then there are at least $q^2-1$ finite places of degree 1 of $L_m$ lying above the place $P_{s_0}$ of $\F_{q^{nm}}(s)$ corresponding to $s_0$. Notice that these places are unramified in $L_m$: if $P_{s_0}$ ramifies in $L_m$ then $\ell-s_0x^{q^t}$ has a multiple root. But this is impossible by assumption, because $t>0$ and $\ell$ has a non-zero linear term, and therefore the derivative of $\ell-s_0x^{q^t}$ is constant.

Hence all finite places of $L_m$ lying above $P_{s_0}$ are unramified, and at least $q^2-1$ of them have degree 1. Fix a place $R$ of $M_m$ lying above $P_{s_0}$.  By Lemma \ref{cor:orbits}, the decomposition group $D(R|P_{s_0})$ has at least $q^2-1$ fixed points when acting on $V\setminus \{0\}$. Thus all elements of $D(R|P_{s_0})$ enjoy the same property. Now notice that there is a surjective map $D(R|P_{s_0})\twoheadrightarrow \Gal(k_R/k_{P_{s_0}})$. Since $P_{s_0}$ has degree 1 then the residue field $k_{P_{s_0}}$ is $\F_{q^{mn}}$, and it is a standard fact that $k_R$ is an extension of $k_m$. Hence, there must exist $\gamma\in D(R|P_{s_0})$ whose image via the aforementioned surjection is the Frobenius for $k_m/\F_{q^{mn}}$. It follows immediately that also $\varphi_m(\gamma)$ is the Frobenius for $k_m/\F_{q^{mn}}$. But then we have a contradiction with (2) because if $\gamma$ acts on $V\setminus \{0\}$ with at least $q^2-1$ fixed points, then $\rk(\gamma-\Id)\leq d-2$.

Finally, let us prove the last part of the statement. One direction is obvious. Thus, suppose $m\geq 1$ is such that $q^{nm}>C$ and $\ell$ is $(q,n,m,t)$-scattered. Then condition (2) holds. Now for all the infinitely many $y\in \Z_{\geq 1}$ that are coprime with $[k_m:\F_{q^{nm}}]$ we have that $\GA_m\cong\GA_{my}$ and $\GG_m\cong\GG_{my}$, and therefore (2) holds for infinitely many $y$'s. It follows that $\ell$ is exceptional $t$-scattered. 
\end{proof}

\begin{corollary}\label{cor:chebotarev}
With the notation of Theorem \ref{thm:chebotarev}, if $\ell$ is $(q,n,m,t)$-scattered then $\GG_m\neq \GA_m$.
\end{corollary}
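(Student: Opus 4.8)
The plan is to argue by contradiction and apply implication (1)$\Rightarrow$(2) of Theorem \ref{thm:chebotarev} with the identity automorphism playing the role of both $\gamma$ and $h$. Assume $\ell$ is $(q,n,m,t)$-scattered but $\GG_m=\GA_m$. First I would observe that then $\GA_m/\GG_m$ is trivial, so the isomorphism $\varphi_m$ identifies $\Gal(k_m/\F_{q^{mn}})$ with the trivial group; equivalently, the field of constants $k_m$ of $M_m$ equals $\F_{q^{mn}}$. Hence the Frobenius automorphism of $k_m/\F_{q^{mn}}$ is the identity, which makes $\gamma\coloneqq\Id\in\GA_m$ an admissible choice in condition (2): $\varphi_m(\gamma)$ is (trivially) a Frobenius for $k_m/\F_{q^{mn}}$.

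Next I would invoke condition (2) of Theorem \ref{thm:chebotarev}, which is available because $q^{mn}>C$ and $\ell$ is $(q,n,m,t)$-scattered. Applying it with this $\gamma$ and with $h\coloneqq\Id\in\GG_m$ gives $\rk(h\gamma-\Id)\geq d-1$; but $h\gamma-\Id=\Id-\Id$ is the zero map on $V$, so $0\geq d-1$, that is $d\leq 1$. Since $\ell$ is $t$-normalized with $t\geq 1$ and $\ell\neq x$, its linearized degree $r$ satisfies $1\leq r\neq t$, so $d=\max\{r,t\}\geq 2$ --- a contradiction. Therefore $\GG_m\neq\GA_m$.

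I do not anticipate a genuine obstacle here: the corollary is essentially the statement that the identity of $\GA_m$ cannot witness condition (2) of Theorem \ref{thm:chebotarev} once $d\geq 2$. The only step that must be spelled out is the implication $\GG_m=\GA_m\Rightarrow k_m=\F_{q^{mn}}$, which is precisely what legitimizes the choice $\gamma=\Id$; everything afterwards is formal. For accuracy one should note that the excluded inequality $d\leq 1$ forces $(\ell,t)=(x,1)$, so the corollary should be read with the proviso $d\geq 2$ --- automatic in the setting of Theorem \ref{thm:main_thm}, where $d$ is an odd prime.
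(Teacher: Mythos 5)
Your proposal is correct and takes essentially the same route as the paper: both apply condition (2) of Theorem \ref{thm:chebotarev} to the element $h\gamma=\Id$ and derive the impossible inequality $\rk(\Id-\Id)\geq d-1$. You are in fact slightly more careful than the paper's one-line proof, since you justify why $\gamma=\Id$ is an admissible choice (via $\GG_m=\GA_m\Rightarrow k_m=\F_{q^{mn}}$) and you correctly flag that the ``clear contradiction'' requires $d\geq 2$, i.e.\ that the degenerate case $\ell=x$, $t=1$ must be excluded.
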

\begin{proof}
If it was $\GG_m=\GA_m$, then $\varphi_m(\Id)$ would be a Frobenius for $k_m/\F_{q^{nm}}$, and hence one should have, in particular, that $\rk(\Id-\Id)\geq d-1$, a clear contradiction.
\end{proof}

\section{Finite transitive linear groups in odd characteristic}

In \cite{hering}, Hering classified all finite 2-transitive affine groups with abelian elementary socle (over finite fields). Looking at the stabilizer of a point, one can deduce the following classification theorem for finite transitive linear groups. Recall that if $q$ is a prime power and $a,b\geq 1$ we denote by $\Gamma L_a(\F_{q^b})$ the general semilinear group, namely the semidirect product $\GL_a(\F_{q^b})\rtimes \Aut_{\vF_q}(\F_{q^b})\cong \GL_a(\F_{q^b})\rtimes C_b$.

\begin{theorem}[{{\cite[Theorem 69.7]{handbook}}}]\label{class}
Let $p$ be an odd prime, $n\in \Z_{\geq 1}$ and $G$ be a subgroup of $\GL_n(\F_p)$ that acts transitively on $\F_p^n\setminus \{0\}$. Then $G$ satisfies one of the following.
\begin{enumerate}
\item $\SL_{e}(\F_{p^{n/e}})\leq G\leq \Gamma L_{e}(\F_{p^{n/e}})$ for some $e\mid n$;
\item $\Sp_{e}(\F_{p^{n/e}})\leq G\leq \Gamma L_{e}(\F_{p^{n/e}})$ for some even $e\mid n$;
\item $n$ is even and $(p,n)$ belongs to a finite list of sporadic pairs.
\end{enumerate}
\end{theorem}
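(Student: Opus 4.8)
The plan is to derive Theorem~\ref{class} by specialising Hering's classification of finite $2$-transitive affine permutation groups \cite{hering} (which builds on Dickson's work \cite{dickson} in the soluble case) to odd characteristic. The starting point is the standard translation: a subgroup $G\leq\GL_n(\F_p)$ acts transitively on $\F_p^n\setminus\{0\}$ exactly when the affine group $V\rtimes G$, with $V=\F_p^n$, is $2$-transitive on the $p^n$ points of $V$. Any such group is primitive with elementary abelian socle $V$, hence is of affine type, and $G$ is its point stabiliser. Hering's theorem then supplies the list of possibilities for $G$; the remaining work is to read off that list and to discard the parts incompatible with $p$ odd.

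Hering's list consists of the following (non-exclusive) families: (i) $\SL_e(\F_{p^{n/e}})\leq G\leq\Gamma L_e(\F_{p^{n/e}})$ for some $e\mid n$; (ii) $G$ normalises $\Sp_e(\F_{p^{n/e}})$ modulo scalars, with $e\mid n$ even and $G\leq\Gamma L_e(\F_{p^{n/e}})$; (iii) $G$ normalises $G_2(2^a)'$, with $p=2$ and $n=6a$; (iv) a finite, explicitly tabulated collection of further groups, the principal family being the normalisers in $\GL_{2^a}(\F_p)$ of an extraspecial $2$-group of order $2^{1+2a}$, so that $n=2^a$, together with a short list of small exceptional groups acting in dimensions $2$ and $4$ over a prime field. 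Families (i) and (ii) are precisely conclusions (1) and (2). Since $p$ is assumed odd, family (iii) cannot occur. It thus remains to check that every surviving member of family (iv) has $n$ even and that only finitely many pairs $(p,n)$ arise; this is conclusion (3).

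For the extraspecial-normaliser examples $n=2^a$ is automatically even, and transitivity on the $p^n-1$ nonzero vectors forces $|G|\geq p^n-1$; since the index of the scalars $Z$ in $N_{\GL_n(\F_p)}(R)$ is bounded in terms of $a$ alone, the inequality $(p^n-1)/(p-1)\leq[N_{\GL_n(\F_p)}(R):Z]$ leaves only finitely many $p$ for each $a$ and also bounds $a$. The remaining small exceptional groups all act on $\F_p^2$ or $\F_p^4$, so $n$ is again even, and the same counting bound $p^n-1\leq|G|$ with $|G|$ bounded pins down finitely many $p$. Collecting these finitely many pairs yields the sporadic list with $n$ even, which completes case (3).

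The essential difficulty here is bookkeeping rather than conceptual: one must enumerate family (iv) accurately from Hering's tables and verify case by case that odd characteristic forces $n$ even — the only way this could fail would be an odd-dimensional sporadic example, and one checks that none exists. Two finer points deserve care. First, the linear and symplectic families overlap because $\Sp_2=\SL_2$, so conclusion (2) is genuinely distinct from (1) only for $e\geq 4$ and should be phrased so as not to double-count. Second, in families (i) and (ii) one must justify that $G$ is in fact semilinear over the correct subfield $\F_{p^{n/e}}$, i.e.\ that it lies inside $\Gamma L_e(\F_{p^{n/e}})$ and does not merely normalise an abstract classical group; this is exactly the point at which the structure theory of maximal subgroups of $\GL_n(\F_p)$ underpinning Hering's proof is invoked.
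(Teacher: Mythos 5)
The paper gives no proof of this statement: it is quoted from the Handbook, with only the one-line justification that it follows from Hering's classification of $2$-transitive affine groups \cite{hering} by passing to the stabiliser of a point. Your derivation is exactly that justification spelled out and is correct in approach and in its conclusions; the only quibble is that Hering's sporadic list also contains a six-dimensional example ($\SL_2(\F_{13})\leq \GL_6(\F_3)$, which the paper itself uses later), not just groups acting on $\F_p^2$ and $\F_p^4$, though this does not affect the claim that $n$ is even in case (3).
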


When $d$ is prime, one can in turn deduce the following proposition.

\begin{proposition}\label{thm:classification}
Let $q$ be an odd prime power, $d$ be an odd prime, and $G$ be a subgroup of $\GL_d(\F_q)$ that acts transitively on $\F_q^d\setminus \{0\}$. Then $G$ satisfies one of the following.
\begin{enumerate}
\item $\SL_{d}(\F_q)\leq G\leq \GL_d(\F_q)$;
\item $G\leq \Gamma L_1(\F_{q^d})$.
\end{enumerate}
\end{proposition}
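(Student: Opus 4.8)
The plan is to pull $G$ back into the full $\F_p$-linear group (where $p$ is the characteristic), apply Theorem~\ref{class} there, and then exploit the single piece of extra information we have — that $G$ is $\F_q$-linear — to collapse each case of that trichotomy onto one of the two desired conclusions. Write $q=p^a$. Since $\F_q^d\cong\F_p^{da}$ as $\F_p$-vector spaces and $G\le\GL_d(\F_q)\le\GL_{da}(\F_p)$ acts transitively on $\F_p^{da}\setminus\{0\}$, Theorem~\ref{class} with $n=da$ places $G$ in one of its three cases. The crucial observation, used throughout, is that $\F_q$-linearity of $G$ means exactly that $G$ commutes with the scalar multiplications $m_\lambda$, $\lambda\in\F_q$; hence $\F_q\cdot\Id$ lies in the commutant of $G$ inside $\mathrm{End}_{\F_p}(\F_p^{da})$, and in particular in the commutant of any subgroup of $G$ supplied by Theorem~\ref{class}.

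Consider first case (1) with $e\ge 2$ (and, verbatim, case (2)). Here $\SL_e(\F_{q_0})\le G$ with $q_0=p^{da/e}$, acting as the \emph{natural} module on $\F_p^{da}=\F_{q_0}^e$. Since $\F_q\cdot\Id$ centralizes $\SL_e(\F_{q_0})$, it lies in $\mathrm{End}_{\F_p[\SL_e(\F_{q_0})]}(\F_{q_0}^e)$, which equals $\F_{q_0}$: the natural module is absolutely irreducible over $\F_{q_0}$, and one checks directly that no endomorphism of $\F_{q_0}^e$ that is $\F_p$-linear but not $\F_{q_0}$-linear commutes with all of $\SL_e(\F_{q_0})$ once $e\ge 2$. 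Therefore $\F_q\subseteq\F_{q_0}$, i.e.\ $a\mid da/e$, i.e.\ $e\mid d$. As $d$ is prime and $e\ge 2$, this forces $e=d$, whence $q_0=q$ and $\SL_d(\F_q)\le G\le\GL_d(\F_q)$, which is conclusion (1). Running the identical argument on case (2) gives $e\mid d$ with $e$ even, impossible for the odd prime $d$; so case (2) never occurs.

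Case (1) with $e=1$ is the semilinear case: $G\le\Gamma L_1(\F_{p^{da}})=\F_{p^{da}}^*\rtimes\Aut_{\F_p}(\F_{p^{da}})$. Intersecting with $\GL_d(\F_q)$, the map $x\mapsto\lambda x^{p^j}$ is $\F_q$-linear precisely when $a\mid j$, so $G\le\F_{p^{da}}^*\rtimes\langle\phi^a\rangle$ with $\phi$ the $p$-power map; since $\phi^a$ has order $d$ on $\F_{p^{da}}=\F_{q^d}$, this subgroup is exactly $\GL_1(\F_{q^d})\rtimes\Aut_{\F_q}(\F_{q^d})=\Gamma L_1(\F_{q^d})$, giving conclusion (2).

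It remains to dispose of case (3), and this is the step I expect to be the main obstacle. Here $da$ is even, and since $d$ is odd this forces $a$ even, so $da\ge 6$. A short inspection of the finite list of sporadic pairs $(p,da)$ in Theorem~\ref{class}(3), under the constraints that $p$ be odd, that $da$ be even, and that $d$ be an odd prime dividing $da$, leaves only $(p,da)=(3,6)$, i.e.\ $d=3$, $q=9$, for which the associated group contains $\SL_2(\F_{13})$. One must then rule out $G\le\GL_3(\F_9)$. This can be done by the same commutant principle used above: the sporadic $6$-dimensional $\F_3[G]$-module is absolutely irreducible over $\F_3$ (its $\F_3$-endomorphism field is $\F_3$, not $\F_9$), so $G$ admits no faithful $3$-dimensional $\F_9$-linear action; alternatively, $G$ is non-solvable with $|G|<|\SL_3(\F_9)|$, so inside $\GL_3(\F_9)$ it could satisfy neither (1) nor (2), and by the trichotomy applied to the $\F_9$-structure it would have to be sporadic in dimension $3$ — impossible, as sporadic transitive linear groups occur only in even dimension. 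Hence case (3) is vacuous, and the proof is complete.
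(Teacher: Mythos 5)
Your proof is correct, but it takes a genuinely different route from the paper's on the main cases. Where the paper disposes of the subgroups $\SL_e(\F_{p^{ad/e}})$ and $\Sp_e(\F_{p^{ad/e}})$ by a case-by-case comparison of group orders ($p$-adic valuations when $d\mid e$, Zsigmondy primes when $d\nmid e$, and a separate, rather delicate Singer-cycle/Schur's-lemma argument for the hardest case $e=2$), you use a single uniform centralizer principle: $\F_q$-linearity of $G$ puts $\F_q\cdot\Id$ inside $\mathrm{End}_{\F_p[H]}(V)$ for $H=\SL_e(\F_{q_0})$ or $\Sp_e(\F_{q_0})$ on its natural module, and that endomorphism ring is $\F_{q_0}$, forcing $e\mid d$ and hence $e=d$ (or a contradiction in the symplectic case). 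This is cleaner and kills the paper's $e=2$ paragraph in one line; amusingly, the paper uses the same Schur-plus-finite-division-ring idea there, but applied only to a cyclic subgroup. Your argument does lean on the assertion that $\mathrm{End}_{\F_p[H]}(\F_{q_0}^e)=\F_{q_0}$ for $e\ge 2$, which you only gesture at; it deserves one line (irreducibility over $\F_p$ follows from transitivity on $\F_{q_0}^e\setminus\{0\}$, and if the endomorphism field were $\F_{q_0^c}$ with $c>1$ one would get an embedding of $H$ into $\GL_{e/c}(\F_{q_0^c})$, ruled out by comparing $p$-parts of the orders). You also treat the $e=1$ case (cutting $\Gamma L_1(\F_{p^{ad}})$ down to $\Gamma L_1(\F_{q^d})$ by $\F_q$-linearity) more explicitly than the paper, which is a plus. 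On the sporadic case both proofs reduce to excluding $\SL_2(\F_{13})$ inside $\GL_3(\F_9)$; the paper settles this by a Magma computation, while your first argument (the $6$-dimensional $\F_3$-module is absolutely irreducible over $\F_3$, so its endomorphism field cannot contain $\F_9$) is the right idea but is itself an unverified representation-theoretic fact of comparable weight to the computer check. Your second, alternative disposal of the sporadic case (``apply the trichotomy to the $\F_9$-structure'') is circular --- Theorem \ref{class} only applies over prime fields, and rerunning it over $\F_3$ just returns the sporadic case you started from --- so it should be dropped in favour of the endomorphism-field argument.
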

\begin{proof}
Let $q=p^a$ where $p$ is a prime and $a\geq 1$. Since $\GL_d(\F_q)\leq \GL_{ad}(\F_p)$ and the group $G$ acts transitively on $\F_p^{ad}\setminus \{0\}$, we can apply Theorem \ref{class} with $n=ad$ and deduce that one of the following holds:
\begin{enumerate}
\item[(I)] $\SL_{e}(\F_{p^{ad/e}})\leq G\leq \Gamma L_{e}(\F_{p^{ad/e}})$ for some $e\mid ad$;
\item[(II)] $\Sp_{e}(\F_{p^{ad/e}})\leq G\leq \Gamma L_{e}(\F_{p^{ad/e}})$ for some even $e\mid ad$;
\item[(III)] $ad$ is even and $(p,n)$ belongs to a finite list of sporadic pairs.
\end{enumerate}
First, suppose that (I) holds for some $e\geq 3$. We claim that $e=d$, which implies immediately that $\SL_{d}(\F_q)\leq G\leq \GL_d(\F_q)$, since $\Gamma L_d(\F_q)=\GL_d(\F_q)$. Let us now split the proof into two cases.

\textbf{Case 1: $d\mid e$}. Since $\SL_{e}(\F_{p^{ad/e}})\leq G$ and $G\leq \GL_d(\F_q)$ by hypothesis, we must have that $v_p(|\SL_{e}(\F_{p^{ad/e}})|)\leq v_p(|\GL_d(\F_q)|)$, where $v_p$ is the usual $p$-adic valuation. However, a quick calculation shows that $v_p(|\SL_{e}(\F_{p^{ad/e}})|)=ad(e-1)/2$, while $v_p(|\GL_d(\F_q)|)=ad(d-1)/2$, forcing $e=d$. 

\textbf{Case 2: $d\nmid e$}. Also in this case we have $\SL_{e}(\F_{p^{ad/e}})\leq G$ and $G\leq \GL_d(\F_q)$ by hypothesis. The condition $d\nmid e$ and the fact that $d$ is prime imply that $e\mid a$ and $(d,e)=1$. Let $\widetilde{p}\coloneqq p^{a/e}$. Let $r$ be a Zsigmondy prime for $\widetilde{p}^{d(e-1)}-1$, i.e.\ a prime that divides $\widetilde{p}^{d(e-1)}-1$ but does not divide $\widetilde{p}^t-1$ for any $1\leq t<d(e-1)$, whose existence is guaranteed by Zsigmondy's Theorem (see \cite[Theorem V]{birkhoff}). It is immediate to check that $r\mid |\SL_e(\F_{p^{ad/e}})|$ but $r\nmid |\GL_d(\F_q)|$, as $et$ can't be a multiple of $d(e-1)$ for any $t\leq d$, yielding a contradiction.

If (II) holds for some $e\geq 6$, one argues in a totally analogous way. For $e=4$, just notice that $|\Sp_4(\F_{p^{ad/4}})|=p^{ad}(p^{ad/2}-1)(p^{ad}-1)$. Looking at a Zsigmondy prime or $p^{ad/2}-1$, one sees immediately that $|\Sp_4(\F_{p^{ad/4}})|$ cannot divide $|\GL_d(\F_q)|$.

Next, assume (I) holds for $e=2$, which is the same as saying that (II) holds for $e=2$ (as $\SL_2(\vF_Q)=\Sp_2(\vF_Q)$ for any prime power $Q$). Since $d$ is odd, $a$ must be even. Let $\widetilde{p}\coloneqq p^{a/2}$. We claim that there is no embedding $\SL_2(\F_{\widetilde{p}^d})\hookrightarrow \GL_d(\F_{\widetilde{p}^2})$. Suppose by contradiction that there is one. Since both $p$ and $d$ are odd primes, $\SL_2(\F_{\widetilde{p}^d})$ is perfect and $[\GL_d(\F_{\widetilde{p}^2}),\GL_d(\F_{\widetilde{p}^2})]=\SL_d(\F_{\widetilde{p}^2})$. Hence, if such embedding exists then there exists also an embedding $\iota\colon\SL_2(\F_{\widetilde{p}^d})\hookrightarrow \SL_d(\F_{\widetilde{p}^2})$. The group $\SL_2(\F_{\widetilde{p}^d})$ contains an element $\sigma$ of order $\widetilde{p}^d+1$, which comes from a Singer cycle of $\GL_2(\F_{\widetilde{p}^d})$. Now let $r$ be a Zsigmondy prime for $\widetilde{p}^{2d}-1$. Clearly $r\mid \widetilde{p}^d+1$. Hence, the cyclic subgroup generated by $\iota(\sigma)$ contains an element $\tau$ of order $r$. Since $r$ is a Zsigmondy prime for $\widetilde{p}^{2d}-1$, one can check that the cyclic group $H$ generated by $\tau$ acts irreducibly on $\F_{\widetilde{p}^2}^d$. Hence, by Schur's Lemma and the fact that finite division rings are fields, the centralizer $C(H)$ of $H$ in $\GL_d(\F_{\widetilde{p}^2})$ is the multiplicative group of a finite field of characteristic $p$. Since $r\mid |C(H)|$, then it must be $|C(H)|=\widetilde{p}^{2d}-1$; in other words $C(H)$ arises from a Singer cycle. Thus its intersection with $\SL_d(\F_{\widetilde{p}^2})$ has order $(\widetilde{p}^{2d}-1)/(\widetilde{p}^2-1)$, which is not divisible by $\widetilde{p}^d+1$. This gives a contradiction, so there is no such embedding and (I) cannot hold for $e=2$.

Finally, the only transitive sporadic group that can appear as a subgroup of $\GL_{ad}(\F_p)$ with $p,d$ odd primes is $\SL_2(\F_{13})$, which is contained in $\GL_6(\F_3)$. This necessarily comes from the setting in which $q=9$ and $d=3$. But one can check with Magma \cite{magma} that $\SL_2(\F_{13})$ does not embed in $\GL_3(\F_9)$, concluding the proof of the proposition.
\end{proof}

\section{Exceptional scattered polynomials of prime degree}
The goal of this section is to prove Theorem \ref{thm:main_thm}. The proof relies on two fundamental ingredients: the Galois theoretical characterization of exceptional scattered polynomials given by Theorem \ref{thm:chebotarev} and the classification of finite transitive groups given by Proposition \ref{thm:classification}.

Let us recall the setup, which is the one explained in Section \ref{sec:galois_characterization}. Let $\ell\in \F_{q^n}[x]$ be a $q$-linearized, exceptional $t$-normalized $t$-scattered polynomial of linearized degree $r$, and let $t>0$. Let $M$ be the splitting field of $\ell-sx^{q^t}$ over $\F_{q^n}(s)$; for every $m\geq 1$ let $M\coloneqq M\cdot \F_{q^{nm}}(s)$, $k_m\coloneqq \overline{\F}_q\cap M_m$ and let $\GA_m\coloneqq \Gal(M_m/\F_{q^{nm}}(s))$ and $\GG_m\coloneqq \Gal(M_m/k_m\cdot \F_{q^{n}}(s))$. Finally, we let $d\coloneqq \max\{r,t\}$ and $V$ be the set of roots of $\ell-sx^{q^t}$, which is an $\F_q$-vector space.

We begin by showing that in the setting of Theorem \ref{thm:main_thm}, there is a very restrictive condition on $\GA_m$ and $\GG_m$.
\begin{lemma}\label{lemma:monodromy}
Let $t\geq 2$, $n$ be an integer, $q$ be a prime power and let $\ell\in \F_{q^n}[x]$ be a $q$-linearized, exceptional $t$-normalized $t$-scattered polynomial of linearized degree $r$. Let $M,\GA,\GG$ be defined as above. Assume that $d\coloneqq\max\{r,t\}$ is an odd prime. Let $C$ be the uniformizing constant for the extension $M/\vF_{q^{n}}(s)$, and suppose $m\geq 1$ is such that $q^{nm}>C$ and $\ell$ is $(q,n,m,t)$-scattered. Then $|\GG_m|=q^d-1$ and $\GA_m\cong\Gamma L_1(q^d)$.
\end{lemma}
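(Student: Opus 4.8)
The plan is to combine the Galois-theoretic characterization of Theorem~\ref{thm:chebotarev} with the group-theoretic classification of Proposition~\ref{thm:classification}. First I would observe that since $\ell$ is $(q,n,m,t)$-scattered with $q^{nm}>C$, condition (2) of Theorem~\ref{thm:chebotarev} holds, and by Corollary~\ref{cor:chebotarev} we have $\GG_m\neq\GA_m$. The key point is that $\GA_m$ acts transitively on $V\setminus\{0\}$: indeed, since $\ell/x-sx^{q^t-1}$ is irreducible over $\F_{q^{nm}}(s)$ (as noted in Corollary~\ref{cor:orbits}), its Galois group $\GA_m$ is a transitive subgroup of the symmetric group on the roots $V\setminus\{0\}$. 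By the remark preceding Theorem~\ref{thm:chebotarev}, $\GA_m$ is an $\F_q$-linear subgroup of $\Aut_{\F_q}(V)\cong\GL_d(\F_q)$ acting transitively on $V\setminus\{0\}=\F_q^d\setminus\{0\}$. Hence Proposition~\ref{thm:classification} applies: either $\SL_d(\F_q)\leq\GA_m\leq\GL_d(\F_q)$, or $\GA_m\leq\Gamma L_1(\F_{q^d})$.

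Next I would rule out the first alternative. Suppose $\SL_d(\F_q)\leq\GA_m$. Since $\GG_m\trianglelefteq\GA_m$ and $\GA_m/\GG_m\cong\Gal(k_m/\F_{q^{nm}})$ is cyclic, $\GG_m$ contains the derived subgroup $[\GA_m,\GA_m]\supseteq[\SL_d(\F_q),\SL_d(\F_q)]=\SL_d(\F_q)$ (here using $d\geq 3$ and $q$ odd so that $\SL_d(\F_q)$ is perfect). Thus $\SL_d(\F_q)\leq\GG_m$. Now I would use condition (2) of Theorem~\ref{thm:chebotarev}: choose any $\gamma\in\GA_m$ mapping to a Frobenius for $k_m/\F_{q^{nm}}$; then for every $h\in\GG_m$ we need $\rk(h\gamma-\Id)\geq d-1$, i.e.\ $h\gamma$ has at most $q-1$ fixed vectors, i.e.\ the fixed space of $h\gamma$ on $\F_q^d$ has dimension $\leq 1$. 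But since $\SL_d(\F_q)\leq\GG_m$, as $h$ ranges over $\GG_m$ the element $h\gamma$ ranges over the coset $\SL_d(\F_q)\gamma$ (or a union of such cosets), and one can certainly find $h$ with $h\gamma=\Id$ (if $\gamma\in\SL_d$) or more generally with $h\gamma$ a transvection-type element (or even the identity after adjusting) having a fixed space of dimension $d-1\geq 2$. A clean way: $\SL_d(\F_q)$ contains elements with $(d-1)$-dimensional fixed space (transvections), and if $\gamma\in\GG_m$ we may take $h=\gamma^{-1}$, giving $h\gamma=\Id$ with $\rk=0<d-1$, contradiction. If $\gamma\notin\GG_m$, one still finds $h\in\SL_d(\F_q)\leq\GG_m$ making $h\gamma$ have large fixed space, since $\SL_d(\F_q)\gamma$ contains elements fixing a hyperplane pointwise. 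Either way (2) fails, so this alternative is impossible.

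Therefore $\GA_m\leq\Gamma L_1(\F_{q^d})$. It remains to pin down $\GG_m$ and to upgrade this inclusion to an isomorphism $\GA_m\cong\Gamma L_1(\F_{q^d})$. Since $\GA_m$ acts transitively on $\F_q^d\setminus\{0\}$, which has $q^d-1$ elements, and since inside $\Gamma L_1(\F_{q^d})=\F_{q^d}^\times\rtimes\Gal(\F_{q^d}/\F_q)$ the subgroup $\F_{q^d}^\times$ already acts regularly (hence transitively) on $\F_{q^d}^\times$, the transitive subgroups of $\Gamma L_1(\F_{q^d})$ are exactly those whose projection covers enough of the torus; because $d$ is prime, $\Gal(\F_{q^d}/\F_q)\cong C_d$ has no proper nontrivial subgroups, and a careful order count (using that a transitive subgroup has order divisible by $q^d-1$, together with the fact that $\GG_m$ must be a proper normal subgroup with cyclic quotient) forces $\GG_m=\F_{q^d}^\times$, so $|\GG_m|=q^d-1$, and then $\GA_m$ must be all of $\Gamma L_1(\F_{q^d})$ — if $\GA_m$ were the proper subgroup $\F_{q^d}^\times$ itself, i.e.\ $\GA_m=\GG_m$, that would contradict Corollary~\ref{cor:chebotarev}, and any intermediate group strictly between $\F_{q^d}^\times$ and $\Gamma L_1(\F_{q^d})$ corresponds to a proper subgroup of $C_d$, which must be trivial. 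Hence $\GA_m\cong\Gamma L_1(\F_{q^d})$ and $|\GG_m|=q^d-1$.

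The main obstacle I anticipate is the last paragraph: one must verify carefully that $\GG_m$ is precisely the torus $\F_{q^d}^\times$ rather than something larger inside $\Gamma L_1(\F_{q^d})$, and that no transitive subgroup strictly between the torus and the full $\Gamma L_1$ can occur. The primality of $d$ is exactly what makes this work, since it kills all intermediate subgroups of the cyclic Galois group $\Gal(\F_{q^d}/\F_q)$; combined with the requirement that $\GG_m$ be a \emph{proper} normal subgroup (Corollary~\ref{cor:chebotarev}) with cyclic quotient, and that $\GA_m$ act transitively (so $(q^d-1)\mid|\GA_m|$), the only possibility is $\GG_m=\F_{q^d}^\times$ and $\GA_m=\Gamma L_1(\F_{q^d})$. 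A secondary subtlety is making sure that in ruling out the $\SL_d$ case, condition (2) of Theorem~\ref{thm:chebotarev} is genuinely violated for an allowed pair $(h,\gamma)$ — i.e.\ that $\gamma$ can be chosen inside $\GG_m$ (or that the relevant coset meets the set of elements with large fixed space), which follows because $\SL_d(\F_q)\leq\GG_m$ already contains transvections with $(d-1)$-dimensional fixed space.
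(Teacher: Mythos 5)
Your overall architecture matches the paper's: both proofs combine Theorem \ref{thm:chebotarev} with Proposition \ref{thm:classification}, and your way of killing the $\SL_d$ alternative (passing to $\GG_m$ via perfectness of $\SL_d(\F_q)$ and the cyclic quotient $\GA_m/\GG_m$, then exhibiting an element of the Frobenius coset of the form $\mathrm{diag}(\lambda,1,\dots,1)$ with $\rk(h\gamma-\Id)\le 1<d-1$) is essentially the paper's argument, just reached through the arithmetic group instead of the geometric one. That part is fine.

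The genuine gap is in the last paragraph. Everything you prove about transitivity concerns $\GA_m$ only (irreducibility of $\ell/x-sx^{q^t-1}$ over $\F_{q^{nm}}(s)$), yet the conclusion $|\GG_m|=q^d-1$ requires $(q^d-1)\mid|\GG_m|$, i.e.\ transitivity of the \emph{geometric} group. This is not a formality: once you are in the branch $\GA_m\le \Gamma L_1(\F_{q^d})$, condition (2) of Theorem \ref{thm:chebotarev} is automatically satisfied by any element of $\Gamma L_1(\F_{q^d})\setminus\F_{q^d}^\times$ (such an element $x\mapsto ax^{q^i}$ has at most $q-1$ nonzero fixed points), so neither condition (2) nor Corollary \ref{cor:chebotarev} excludes, say, $\GA_m$ being a Singer cycle of order $q^d-1$ with $\GG_m$ a proper subgroup, or $\GA_m=\Gamma L_1(\F_{q^d})$ with $\GG_m$ a normal subgroup of order $(q^d-1)/(q-1)$ and cyclic quotient. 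Your appeal to the primality of $d$ only controls subgroups between $\F_{q^d}^\times$ and $\Gamma L_1(\F_{q^d})$, not subgroups below the torus. The paper closes this by observing at the outset that $(\ell-sx^{q^t})/x$ is \emph{absolutely} irreducible (it is linear in $s$ with coprime coefficients $\ell/x$ and $x^{q^t-1}$, so irreducibility persists over $k_m(s)$), whence $\GG_m$ itself acts transitively on $V\setminus\{0\}$ and $(q^d-1)\mid|\GG_m|\mid d(q^d-1)$; combined with $\GG_m\ne\GA_m$ this forces $|\GG_m|=q^d-1$ and $\GA_m=\Gamma L_1(\F_{q^d})$. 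You need to add this absolute irreducibility step (or an equivalent argument) for the order computation to go through.
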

\begin{proof}
Before starting the proof, let us recall the following elementary facts, that will be helpful later on:
\begin{equation}\label{eq:useful}
|\GL_d(\F_q)|=\prod_{i=0}^{d-1}(q^d-q^i),\qquad \GL_d(\F_q)\cong \SL_d(\F_q)\rtimes \F_q^*.
\end{equation}
so that in particular $(q-1)|\SL_d(\F_q)|=|\GL_d(\F_q)|$.

Now, since $(\ell-sx^{q^t})/x$ is absolutely irreducible, $\GG_m$ acts transitively on $V\setminus\{0\}$. Thus by Proposition \ref{thm:classification} we only have two possibilities for $\GG_m$: either $\SL_d(\F_q)\leq \GG_m\leq \GL_d(\F_q)$ or $\GG_m\leq \Gamma L_1(q^d)$. As a first step of the proof, we will show that $\SL_d(\F_q)\leq \GG_m\leq \GL_d(\F_q)$ leads to a contradiction. In fact, suppose that such inclusion holds, and let $\gamma\in \GA$ be a Frobenius for the field of constants $k_m$ of $M_m$. Any element of the coset $\GG_m\gamma$ is a Frobenius for $k_m$. Hence, up to multiplying on the left by an element of $\SL_d(\F_q)$, we can assume by \eqref{eq:useful} that $\gamma$ has the form $(\lambda_{i,j})_{i,j=1,\ldots,d}$ where $\lambda_{1,1}=\lambda\in \F_q^*$, $\lambda_{i,i}=1$ for $i>1$ and $\lambda_{i,j}=0$ otherwise. This can be seen by noticing that a splitting of the short exact sequence $\SL_d(\F_q)\hookrightarrow \GL_d(\F_q)\twoheadrightarrow \F_q^*$ is the map $\F_q^*\hookrightarrow \GL_d(\F_q)$ that sends each $\lambda\in \F_q^*$ to the matrix $(\lambda_{i,j})_{i,j}$ described above. But then, $\rk(\gamma-\Id)\leq 1<d-1$, contradicting Theorem \ref{thm:chebotarev}.

Hence, we must have $\GG_m\leq \Gamma L_1(q^d)$. Since $\GG_m$ acts transitively on $V\setminus\{0\}$, which has cardinality $q^d-1$, it follows by the Orbit-Stabilizer Theorem that $(q^d-1)\mid |\GG_m|$. On the other hand, $|\Gamma L(1, q^d )|=d(q^d-1)$ and hence $|\GG_m|\in \{q^d-1,d(q^d-1)\}$ because $d$ is prime. Now we claim that we must have $\GA_m\leq \Gamma L(1,q^d)$. Notice that this ends the proof, because $\GG_m\trianglelefteq \GA_m$, and hence it can only be $\GA_m=\GG_m$ or $\GA_m=\Gamma L(1,q^d)$; however $\GA_m=\GG_m$ cannot occur by Corollary \ref{cor:chebotarev}.

To prove that $\GA_m\leq \Gamma L(1,q^d)$, we proceed by contradiction. By Proposition \ref{thm:classification}, the only other possibility is that $\SL_d(\F_q)\trianglelefteq \GA_m\leq \GL_d(\F_q)$. If this holds, since $\GG_m\trianglelefteq \GA_m$ we must have that $\GG_m\cap \SL_d(\F_q)\trianglelefteq \SL_d(\F_q)$. If $\GG_m\cap \SL_d(\F_q)=\SL_d(\F_q)$ then $|\SL_d(\F_q)|$ must divide either $q^d-1$ or $d(q^d-1)$, and it is immediate to see that this is impossible because of \eqref{eq:useful}. On the other hand, since $d\geq 3$ and $\PSL_d(\vF_q)$ is simple, the only non-trivial normal subgroups of $\SL_d(\F_q)$ are those contained in the intersection of $\SL_d(\F_q)$ with the center of $\GL_d(\F_q)$ (see \cite{dickson}). Thus if $\GG_m\cap \SL_d(\F_q)\neq\SL_d(\F_q)$ then we have that, in particular, $|\SL_d(\F_q)\cap \GG_m|\leq q-1$. Hence
$$|\GL_d(\F_q)|\geq |\GG_m\cdot \SL_d(\F_q)|=\frac{|\GG_m||\SL_d(\F_q)|}{|\GG_m\cap \SL_d(\F_q)|}\geq \frac{(q^d-1)|\GL_d(\F_q)|}{(q-1)^2},$$
which is a contradiction because $d\geq 3$ and therefore $\displaystyle |\GL_d(\F_q)|<\frac{(q^d-1)|\GL_d(\F_q)|}{(q-1)^2}$.
\end{proof}

\begin{proof}[Proof of Theorem \ref{thm:main_thm}]
Let $C$ be the uniformizing constant for $M/\vF_{q^n}(s)$. Since $\ell$ is exceptional $t$-scattered, there exists $m\geq 1$ such that $q^{mn}>C$ and $\ell$ is $(q,n,m,t)$-scattered. By Lemma \ref{lemma:monodromy} it follows immediately that the constant field of $M_m$ is $k_m=\F_{q^{dnm}}$ and that the splitting field $M_m$ of $\ell-sx^{q^t}$ over $\F_{q^{dnm}}(s)$ is simply $\F_{q^{dnm}}(s)[x]/(\ell/x-sx^{q^t-1})$ because it contains one root and it has the correct degree. Notice that $M_m$ is isomorphic to the rational function field over $\F_{q^{dnm}}$, and hence its places of degree 1 are in 1-1 correspondence with the $\F_{q^{dnm}}$-rational points of the projective line.

From now on, we will denote by $P_0$ and $P_\infty$ the places of $\F_{q^{dnm}}(s)$ corresponding to $0$ and to $\infty$, respectively. Similarly, we will denote by $Q_0$ and $Q_\infty$ the places of $M_m$ corresponding to $0$ and to $\infty$, respectively.


Now we have to distinguish two cases, namely $t<r$ and $t>r$.

If $t<r$, then consider the place $P_\infty$. The only two places of $M_m$ that lie above $P_\infty$, are $Q_0$ and $Q_\infty$. It is immediate to check that the ramification index of $Q_0/P_\infty$ is $(q^r-1)-(q^t-1)=q^t(q^{r-t}-1)$. Since $t>0$, it follows that this ramification index is divisible by $q$, and this is impossible since $M_m/\F_{q^{dnm}}(s)$ is a Galois extension of degree $q^d-1$, which is coprime with $q$.

If $t>r$, we look at $P_0$: the place $Q_\infty$ lies above it and has ramification index $q^t-q^r$. If $r>0$, this is divisible by $q$ and we are led to a contradiction by the same argument we used in the case $t<r$. Therefore it must be $r=0$, i.e.\ $\ell=x$.
\end{proof}

\bibliographystyle{plain}
\bibliography{bibliography}

\end{document}